\documentclass[12pt,oneside,reqno]{amsart}

\usepackage{amsmath,amscd,amsthm,amsfonts,amssymb}
\usepackage{mathtools,mathrsfs}
\mathtoolsset{showonlyrefs}
\usepackage{comment}
\usepackage{todonotes}
\usepackage{bm}
\usepackage{bbm}
\usepackage{url}
\usepackage{esint}
\usepackage{dsfont}
\usepackage{relsize}
\usepackage{enumitem}
\usepackage{stmaryrd}

\usepackage{color,xcolor}

\usepackage{geometry}
\usepackage{hyperref}

\numberwithin{equation}{section}
\newtheorem{theorem}{Theorem}[section]

\newtheorem{lemma}[theorem]{Lemma}

\newtheorem{remark}[theorem]{Remark}

\newcommand{\R}{\mathbb{R}}

\newcommand{\p}{\partial}

\newcommand{\DN}{\Lambda^T}

\DeclareMathOperator{\tr}{tr}
\DeclareMathOperator{\diam}{diam}
\newcommand{\abs}[1]{\left\lvert#1\right\rvert}

\newcommand{\der}{\mathrm{d}}

\usepackage[framemethod=tikz]{mdframed}
\usepackage{url}

\newmdenv[backgroundcolor=blue!10,innerlinewidth=0.5pt,roundcorner=4pt,linecolor=aliceblue,innerleftmargin=6pt,innerrightmargin=6pt,innertopmargin=6pt,innerbottommargin=6pt]{mybox}

\title{Rigidity of homogeneous Lamé systems}

\author[Ilmavirta]{Joonas Ilmavirta}
\address{J. Ilmavirta, Department of Mathematics and Statistics, University of Jyv\"askyl\"a, Jyv\"askyl\"a, 40014,  Finland} \email{joonas.ilmavirta@jyu.fi}

\author[Saksala]{Teemu Saksala}
\address{T. Saksala, Department of Mathematics\\
North Carolina State University, Raleigh\\ 
NC 27695, USA}
\email{tssaksal@ncsu.edu}

\author[Yan]{Lili Yan}
\address{L. Yan, Department of Mathematics\\
North Carolina State University, Raleigh\\ 
NC 27695, USA}
\email{lyan6@ncsu.edu}
\date{}

\begin{document}

\begin{abstract}
In this short paper, we show that any Lam\'e system whose Dirichlet-to-Neumann map for the elastic wave equation agrees with the one arising from the homogeneous Lam\'e system must actually be homogeneous. We do not need to impose any assumptions for the Lam\'e coefficients that we aim to recover. 
We use the fact that the homogeneous system gives rise to a geometry that is both simple and admits a strictly convex foliation.

\end{abstract}

\maketitle

\section{Uniqueness of Lamé triplets}

We prove that ``if a Lamé system looks homogeneous, then it is homogeneous'':

\begin{theorem}
\label{thm:special}
Let $\Omega \subset \R^3$ be an open, bounded, and strictly convex domain with smooth boundary.
Let $(\lambda_1,\mu_1,\rho_1)$ be constants and let $(\lambda_2,\mu_2,\rho_2)$ smooth functions, both satisfying positivity conditions~\eqref{eq:strong-convexity}.
If the hyperbolic Dirichlet-to-Neumann maps $\DN_1$ and $\DN_2$ associated with these coefficients agree up to time $T>
\sqrt{\frac{\rho_1}{\mu_1}}\diam(\Omega)$, then
\begin{equation}
\lambda_1 = \lambda_2,
\quad 
\mu_1 = \mu_2,
\quad
\text{and}
\quad
\rho_1 = \rho_2.
\end{equation}
\end{theorem}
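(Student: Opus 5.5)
The plan is to reduce the problem to known rigidity results for the travel time geometry of the P- and S-wave speeds, and then to recover $\rho$ by a separate argument. There are three steps.

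\textbf{Step 1: boundary determination.} Boundary determination for the hyperbolic Dirichlet-to-Neumann map (Rachele's results) shows that $\DN_1=\DN_2$ forces $\lambda_2,\mu_2,\rho_2$ and all their normal derivatives to agree with the constants $\lambda_1,\mu_1,\rho_1$ on $\partial\Omega$. This gives agreement to infinite order on the boundary.

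\textbf{Step 2: boundary distance functions.} Introduce the Riemannian metrics
\[
g_{P,j}=\frac{\rho_j}{\lambda_j+2\mu_j}\,e \quad\text{and}\quad g_{S,j}=\frac{\rho_j}{\mu_j}\,e,
\]
where $e$ is the Euclidean metric. For $j=1$ these are constant multiples of $e$. Since $\Omega$ is strictly convex, $(\Omega,g_{\cdot,1})$ is simple and admits a strictly convex foliation, for example by level sets of $|x-x_0|$ or by shrinking convex hypersurfaces.

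The main step is to show that $\DN_1=\DN_2$ up to time $T$ gives equality of the boundary distance functions $d_{S,1}=d_{S,2}$ on $\partial\Omega\times\partial\Omega$, and likewise for P. I would follow Rachele's propagation-of-singularities argument. Send a conormal singularity in from the boundary, polarized in the S (or P) mode. Read off the arrival times of the transmitted singularities from $\DN$. Then compare the first arrival times between boundary points. The condition $T>\sqrt{\rho_1/\mu_1}\operatorname{diam}\Omega$ guarantees that every S-geodesic of the reference system exits before time $T$. In the reference system these rays are straight lines, so there are no caustics and the arrival times equal the boundary distances.

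I expect this to be the main obstacle. We have no a priori assumptions on the unknown metric: no simplicity, and possibly trapping or conjugate points. I would handle this as follows. The singularity that reaches $\partial\Omega$ first from a boundary point in direction $\xi$ travels along a $g_{S,2}$ geodesic. Its arrival time can only be at least the $g_{S,2}$ distance, and since the data coincide, these times equal the reference lengths. That gives $d_{S,2}\le d_{S,1}$ along with equality of the lens/scattering data for the rays actually observed. The reverse inequality comes from a shortest $g_{S,2}$ path, which carries singularities observed within time $T$. With distance data matched and $g_{S,1}$ simple, boundary rigidity of simple metrics in a conformal class (Mukhometov, Croke) gives $\mu_2/\rho_2=\mu_1/\rho_1$. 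Alternatively, the Stefanov--Uhlmann--Vasy local/global rigidity under a strictly convex foliation gives the same conclusion, and it does not need a priori simplicity of $g_{S,2}$ because the foliation argument extends layer by layer from the boundary. The same argument gives $(\lambda_2+2\mu_2)/\rho_2=(\lambda_1+2\mu_1)/\rho_1$.

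\textbf{Step 3: recovering the density.} At this point both wave speeds are known constants, and it remains to show $\rho_2=\rho_1$. The principal symbols now coincide. Using the known speeds, I would compare the subprincipal terms, in the form of the amplitude of the transmitted S-wave along the straight rays. This yields the X-ray transform along lines of $\nabla\log\rho_2$-type quantities. By injectivity of the Euclidean ray transform together with the boundary values from Step 1, $\rho_2$ is constant and equal to $\rho_1$. Hence $\mu_2=\mu_1$ and $\lambda_2=\lambda_1$.
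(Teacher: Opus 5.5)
Your Steps 1 and 2 follow the paper's route: boundary determination, then equality of the $p$- and $s$-boundary distance functions, then Croke's conformal rigidity, which needs only the reference metric to be simple. One caution on Step 2. With sources placed on $\p\Omega$, the unknown model can produce reflected and mode-converted arrivals that travel partly as $p$-waves. Such arrivals can come earlier than $d_{S,2}(z,w)$, so an $s$-polarized singularity arriving at time $d_{S,1}(z,w)$ does not by itself give $d_{S,2}\le d_{S,1}$. The paper sidesteps this in two ways. It poses the Cauchy problem with data supported outside $\overline\Omega$, for a common extension of the coefficients (available by Step 1); there is then no boundary, and hence no mode conversion. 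It also uses Lemma~\ref{lma:sff} to get strict convexity of $\p\Omega$ for $g_{2,s}$. As a result, exit points and exit times of the two models coincide, and the distance is a minimum of exit times.

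The genuine gap is Step 3: once the speeds are the constants of model 1, the amplitude of a transmitted $s$-wave carries no interior information about $\rho_2$.
\begin{itemize}
\item For $u\sim e^{i\omega(\phi-t)}a_0$ with $a_0\perp\nabla\phi$, the transport equation projected to the transverse plane gives $\nabla\cdot(\mu_2\abs{a_0}^2\nabla\phi)=0$, and the polarization is parallel transported.
\item With $\mu_2=\rho_2c_s^2$ and straight rays, this means $a_0$ is $(\rho_2J)^{-1/2}$ times a constant vector, where $J$ is the Euclidean spreading.
\item So the ``ray transform of $\nabla\log\rho_2$'' you would extract is $\int_\gamma\dot\gamma\cdot\nabla\log\rho_2=\log\rho_2(\text{exit})-\log\rho_2(\text{entry})$, which Step 1 already gives.
\item Injectivity fails exactly here: the ray transform of one-forms annihilates $\nabla(\log\rho_2-\log\rho_1)$, because this function vanishes on $\p\Omega$.
\end{itemize}
Interior information appears only at the next order of the expansion, through the $p$--$s$ coupling by $\nabla\log\rho_2$ and through second derivatives of $\rho_2$. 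The resulting integrand depends on direction. Showing that its ray transform forces $\rho_2$ to be constant is a genuine theorem, not a consequence of scalar injectivity. Rachele's version of it excludes the locus $\lambda=2\mu$, which the statement allows (for example $\lambda_1=2\mu_1$).

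The paper instead invokes Lemma~\ref{lem:Zhai}. It applies here because $\abs{x}^2$ is strictly convex along every straight line, and every $g_{1,s}$-geodesic in the strictly convex domain $\Omega$ has length at most $\sqrt{\rho_1/\mu_1}\diam(\Omega)<T$. Once Step 2 gives $c_{2,p}=c_{1,p}$ and $c_{2,s}=c_{1,s}$, replacing your Step 3 with this citation completes the proof.
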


Theorem~\ref{thm:special} is a corollary of Theorem~\ref {thm:general} stated below because the homogeneous model satisfies the geometric assumptions.
We next define more carefully the concepts appearing in these theorems.
Let $\Omega \subset \R^3$ be a smooth, bounded, and strictly convex domain with smooth boundary $\p\Omega$.
We refer to the \textit{Lamé parameters} $(\lambda,\mu)$ and the density $\rho$ together as a \textit{Lamé triplet}.
We assume the functions $\lambda,\mu,\rho\colon\overline{\Omega}\to\R$ to be smooth and satisfy positivity conditions we will lay out in a moment.

A Lamé triplet $(\lambda,\mu,\rho)$ gives rise to an \textit{isotropic stiffness tensor} field defined pointwise by
\begin{equation}
c_{ijk\ell}
=
\lambda\delta_{ij}\delta_{k\ell}
+
\mu\left(\delta_{ik}\delta_{j\ell}
+
\delta_{i\ell}\delta_{jk}\right).
\end{equation}
Stiffness tensors of this special type depend only on the two Lam\'e parameters $(\lambda,\mu)$ instead of 21 independent parameters of an anisotropic stiffness tensor field, which is required to only satisfy the minimal symmetry conditions
\begin{equation}
c_{ijk\ell}=c_{jik\ell}=c_{ij\ell k}=c_{k\ell ij}.
\end{equation}

The elastic wave operator $P$ is defined by
\begin{equation}
\label{eq:EWO}
(Pu(t,x))_i
=
\rho(x) \partial_t^2 u_i(t,x)
-
\sum_{j,k,\ell}
\partial_{x_j}(c_{ijk\ell}(x) \partial_{x_\ell}u_{k}(t,x)),
\quad \text{ for } i\in \{1,2,3\},
\end{equation}
while acting on sufficiently smooth vector fields $u\colon\R\times\Omega\to\R^3$.

For the elastic wave operator to be hyperbolic, we require that the density $\rho$ is positive and that the spatial operator in equation~\eqref{eq:EWO} is elliptic.
This, as it turns out\footnote{
This ellipticity amounts to requiring that the $n$-dimensional stiffness tensor $c$ is positive definite as a quadratic form on the space of symmetric real $n\times n$ matrices.
That is, $c(A,A)\coloneqq A_{ij}c_{ijk\ell}A_{k\ell}>0$ for all symmetric non-zero $A\in\R^{n\times n}$.
First, take a matrix for which $A_{12}=A_{21}=1$ and all other entries vanish.
The requirement in this case is equivalent to $\mu>0$.
Let us then consider a general $A$.
Using the Cauchy--Schwarz inequality on the vector of eigenvalues of $A$ shows that $\tr(A)^2\leq n\tr(A^2)$.
Using the Hilbert–Schmidt norm, we thus get $c(A,A)=\lambda\tr(A)^2+2\mu\tr(A^2)\geq(n\lambda+2\mu)\abs{A}^2$, with equality when $A$ is a multiple of the identity.
This gives the other condition $n\lambda+2\mu>0$.
In the case $n=1$, the condition $\mu>0$ becomes unnecessary, but the condition $\lambda+2\mu>0$ remains correct.
}, is equivalent to
\begin{equation}
\label{eq:strong-convexity}
\begin{cases}
\rho>0
,\\
\mu>0
,\\
3 \lambda+2\mu>0
\end{cases}
\end{equation}
at all points in $\overline{\Omega}$. We also note that the elastic wave operator $P$, arising from isotropic elasticity, is of real principal type as defined for instance in \cite[Definition 3.1]{dencker1982propagation}. We refer to \cite[Section 4.1]{rachele2000inverse} for the proof of this claim.

Given a sufficiently regular function $f\colon(0,T)\times\partial\Omega\to\R^3$, we consider the hyperbolic boundary value problem
\begin{equation}
\label{eq:bvp}
\begin{cases}
Pu = 0 & \text{in } Q^T:=(0,T)\times\Omega,\\
u = f & \text{on } \Sigma^T:=(0,T)\times \partial \Omega, \\
u(0,x)=\partial_t u(0,x) = 0 & \text{in }\Omega.
\end{cases}
\end{equation}
We denote the Neumann boundary value of the solution $u$ by $\DN f\colon \Sigma^T \to\R^3$, defined by
\begin{equation}
(\DN f)_i
=
\nu_jc_{ijk\ell}\p_{x_\ell} u_k
\quad \text{ for } i\in \{1,2,3\}.
\end{equation}
Above and in what follows, we adopt the convention of summing over the repeated indices. 
Physically, we may think of $f$ as the surface displacement and $\DN f$ as the resulting traction (force) on the surface. 

Well-posedness of the initial boundary value problem~\eqref{eq:bvp} and thus well-definedness of the Dirichlet-to-Neumann map (DN-map) $\DN$ for $f\in L^2(\Sigma^T;\R^3)$ is given in~\cite[Theorem 1]{belishev2002dynamical}. In particular, for each $f \in L^2(\Sigma^T;\mathbb{R}^3)$, there exists a
unique solution
$u \in C([0,T]; L^2(\Omega;\mathbb{R}^3))$.

The hyperbolic system $Pu=0$ has two wave speeds.
Pressure waves (or $p$-waves or primary waves or longitudinal waves) travel at the speed $c_p=\sqrt{\frac{\lambda+2\mu}{\rho}}$ 
and shear waves (or $s$-waves or secondary waves or transversal waves) travel at the speed
$c_s=\sqrt{\frac{\mu}{\rho}}$.
We always have $c_p>c_s>0$ due to~\eqref{eq:strong-convexity}.
The singularities of elastic waves follow the geodesics of the conformally Euclidean metrics
$g_p=c_p^{-2}\delta$ and $g_s=c_s^{-2}\delta$,
where $\delta$ is the Euclidean metric tensor.
See~\cite[Theorem~4.2]{dencker1982propagation} for general propagation of singularities for a hyperbolic system of real principal type and \cite[Proposition 4.1]{rachele2000inverse} for the specific isotropic elastic case.

In our main theorem below, we consider $i=1$ to be a well-behaved and known model and $i=2$ to be an \textit{unknown model with no geometric assumptions whatsoever}.
We show that if $\DN_1=\DN_2$, then in fact the two models agree.
Our proof of the homogeneous case of Theorem~\ref {thm:special} is based on the fact that the Euclidean metric satisfies the geometric assumptions of Theorem~\ref {thm:general}. These two assumptions are the \textit{simplicity} of the metric, the domain is strictly convex and between any two points there is a smoothly varying unique distance minimizing geodesic (see for instance, \cite[Section 3.8.]{paternain2023book} for more details on simple Riemannian metrics), and the \textit{strictly convex foliation condition} i.e. the existence of a smooth strictly convex function in a sense that the restriction of this function on each geodesic is strictly convex (see for instance, \cite[Section 2]{paternain2019geodesic} for more details on the foliation condition for Riemannian metrics). Both of these conditions are stable over small $C^2$-variations of the metric.

The existence of a strictly convex function does not imply simplicity, as the former condition allows the existence of conjugate points while the latter forbids it. It is an open problem whether simple manifolds admit a strictly convex function in dimension $n\geq3$. 

\begin{theorem}
\label{thm:general}
Let $\Omega \subset \R^3$ be an open and bounded domain with smooth boundary.
Suppose that the Dirichlet-to-Neumann maps $\DN_1$ and $\DN_2$ for the boundary value  problem~\eqref{eq:bvp} corresponding to two smooth Lam\'e triplets $(\lambda_i,\mu_i,\rho_i)$ with $i=1,2$ satisfy the following:
\begin{itemize}
\item[(a)] 
$\DN_1=\DN_2$.
\item[(b)]
$T>\diam_{g_{1,s}}(\Omega)$ in the sense of the distance induced by $g_{1,s}$ on $\overline{\Omega}$.
\item[(c)]
Both Lamé triplets $(\lambda_i,\mu_i,\rho_i)$ satisfy \eqref{eq:strong-convexity}.
\item[(d)]
$(\overline{\Omega},g_{1,p})$ is a simple Riemannian manifold.
\item[(e)]
$(\overline{\Omega},g_{1,s})$ is simple and admits a strictly convex foliation.
\end{itemize}
Then
\begin{equation}
\lambda_1 = \lambda_2,
\quad 
\mu_1 = \mu_2,
\quad
\text{and}
\quad
\rho_1 = \rho_2.
\end{equation}
\end{theorem}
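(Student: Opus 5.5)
The plan is to recover the two travel-time geometries from the DN map, use rigidity results for them, and then recover $\rho$ from the full-jet boundary data.

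\textbf{Step 1: boundary determination.} By Rachele's boundary determination for the elastic DN map, equality $\DN_1=\DN_2$ implies that all normal derivatives of $\lambda,\mu,\rho$ agree on $\p\Omega$, for both triplets. This step needs no interior geometric assumptions.

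\textbf{Step 2: lens data of the $s$-wave metric.} Propagation of singularities (Dencker, Rachele Proposition 4.1) lets us send a conormal singularity into $Q^T$ from $\Sigma^T$, polarized as an $s$-wave. Its exit time and point are read off from $\DN$. Since $T>\diam_{g_{1,s}}(\Omega)$, and $g_{1,s}$ is simple, every $g_{1,s}$-geodesic exits before time $T$. Hence the singularity for model 1 is observed exiting in time.

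The main subtlety is the unknown model 2, which carries no geometric assumptions. The same boundary data must therefore be produced by a $g_{2,s}$ (or $g_{2,p}$) bicharacteristic. The $s$ and $p$ branches are distinguished by the polarization of the principal symbol of the reflected Neumann data, which agrees at the boundary by Step 1. From this I would argue that the boundary exit map and travel times of $g_{2,s}$ agree with those of $g_{1,s}$ for all geodesics of $g_{1,s}$. Trapped or long geodesics in $g_{2,s}$ must be excluded. I would do this by noting that a singularity not appearing in $\DN_2$ before $T$ would contradict its appearance in $\DN_1$. Hence $g_{2,s}$ is nontrapping with the same lens data, and in particular it has the same boundary distance function.

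\textbf{Step 3: rigidity of $c_s$.} The metric $g_{1,s}$ is conformally Euclidean and admits a strictly convex foliation. I would invoke the Stefanov--Uhlmann--Vasy local-to-global lens rigidity for conformal factors under the foliation condition: the travel times within a fixed conformal class determine the conformal factor. Since both metrics are $c^{-2}\delta$, this gives $c_{1,s}=c_{2,s}$. Simplicity is used to identify the lens data with the boundary distance function and to pass between the two.

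\textbf{Step 4: $c_p$ and $\rho$.} For $p$-waves, $g_{1,p}$ is simple, and once the lens data match, Mukhometov's theorem in the conformal class gives $c_{1,p}=c_{2,p}$. To obtain the lens data for $p$-waves, I would repeat Step 2. Here the time $T$ suffices because $c_p>c_s$ makes $\diam_{g_{1,p}}<\diam_{g_{1,s}}$.

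This yields $\mu/\rho$ and $\lambda/\rho$. To find $\rho$ itself, I would use Rachele's argument: the transport equation for the principal amplitude of $s$-waves involves $\rho$ (through $\mu$ and $\rho$ separately). The equality of the amplitudes of the transmitted singularity, seen in $\DN$, gives an ODE along every $g_s$-geodesic. With boundary values from Step 1, this ODE forces $\rho_1=\rho_2$ along geodesics, hence everywhere. It is the ray transform of $\log(\rho_1/\rho_2)$-type quantities, and injectivity on simple or foliated metrics gives zero.

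Then $\mu=\rho c_s^2$ and $\lambda=\rho c_p^2-2\mu$ agree.

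\textbf{Main obstacle.} I expect Step 2 for the unknown model to be the main obstacle. Transferring the lens data when $g_{2,\cdot}$ may a priori be non-simple or trapping, and when $p$ and $s$ singularities may a priori mix, requires care.
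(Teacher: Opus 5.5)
Your treatment of the wave speeds is a workable alternative to the paper's route. You recover lens data from boundary sources and apply conformal lens rigidity. The paper recovers only the boundary distance functions, from exterior Cauchy problems, and applies Croke's conformal rigidity to both speeds. Your first-hit argument for the unknown model also tacitly needs $\partial\Omega$ to be strictly convex for $g_{2,p}$ and $g_{2,s}$, so that model-2 rays exit transversally. This follows from Step 1 via Lemma~\ref{lma:sff}.

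The genuine gap is the recovery of $\rho$. Take the ansatz $u=e^{i\omega(\phi-t)}(a_0+\omega^{-1}a_1+\dots)$ with $a_0\perp\nabla\phi$, and let $\Pi^\perp$ be the projection orthogonal to $\nabla\phi$. Projecting the order-$\omega$ equation gives $\Pi^\perp[2(\nabla\phi\cdot\nabla)a_0]+(\Delta\phi+\nabla\log\mu\cdot\nabla\phi)a_0=0$. All $\lambda$-terms are parallel to $\nabla\phi$ and drop out. With $c_s$ fixed, $\rho$ enters only through $\nabla\log\mu\cdot\nabla\phi$, which is the derivative of $\log\mu$ along the ray. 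Consequently:
\begin{itemize}
\item $\mu|a_0|^2$ times a factor determined by $c_s$ is conserved along the ray;
\item the polarization obeys the $\rho$-independent law $\Pi^\perp\frac{d}{ds}(a_0/|a_0|)=0$;
\item the principal amplitude visible in $\DN$ depends on $\rho$ only at the entry and exit points, which Step 1 already gives.
\end{itemize}
So the ``ray transform of $\log(\rho_1/\rho_2)$-type quantities'' you obtain is the integral of a derivative along the geodesic. It vanishes identically, and injectivity gives nothing. The interior information lives in subprincipal terms. Extracting it is the content of Rachele's density paper, which determines $\rho$ only off the locus $\lambda=2\mu$ and only when the positive curvature of $g_p$ is small. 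Neither restriction is available here: (d) allows large positive curvature, and Theorem~\ref{thm:special} with constants $\lambda_1=2\mu_1$ lies entirely on that locus.

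The missing ingredient is Zhai's theorem (Lemma~\ref{lem:Zhai}). Once $c_{1,p}=c_{2,p}$ and $c_{1,s}=c_{2,s}$ are known, it gives $\rho_1=\rho_2$ with no condition on $\lambda-2\mu$. It requires three things: $\partial\Omega$ strictly convex for $g_s$, a strictly convex function for $g_s$, and $T$ larger than the length of every $g_s$-geodesic. The last condition follows from (b), because every geodesic of a simple metric minimizes length between its endpoints. This is where the foliation hypothesis in (e) is actually needed. Your plan spends it on $c_s$, where simplicity together with Croke's theorem already suffices, exactly as for $c_p$.
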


A few points about the nature of our assumptions is in order:
\begin{itemize}
\item
Asymmetric assumptions between models:
\begin{itemize}
    \item We make stringent geometric assumptions on model 1 (i.e. the Lamé triplet $(\lambda_1,\mu_1,\rho_1)$), while no geometric assumptions whatsoever are made on model 2.
    \item Similar asymmetric rigidity results exist in the literature.
    In \cite[Theorem 1.4.]{stefanov2016boundary} only one of the two metrics compared is assumed to admit a strictly convex foliation.
    In \cite{croke1991rigidity} (which we use as lemma~\ref{lem:Cro}) and \cite{wen2015simple} only one of the two compared Riemannian metrics is assumed to be simple.
\end{itemize}
\item
Asymmetric assumptions between polarizations:
Our geometric assumptions are different for $p$ and $s$ wave metrics in model 1.
\item
Tradeoff between measurement time and geometric assumptions:
\begin{itemize}
    \item Our measurement time $T$ is only assumed to exceed the $g_{1,s}$-diameter of the domain in lemma~\ref{lem:Rac2}, not the lengths of all geodesics.
    Once we assume simplicity in theorem~\ref{thm:general}, these two quantities agree.
    \item Our reduction from PDE data to geometric data produces boundary distance data, not lens data (i.e. scattering relation and lengths of geodesics; see e.g.~\cite{stefanov2016boundary}).
    Lens data is compatible with the foliation condition while boundary distance data is not, which is why we make the additional assumption of simplicity of $g_{1,p/s}$.
    \item It is possible to formulate a similar result to theorem~\ref{thm:general} with foliation conditions only on the metrics $g_{1,p/s}$. 
    The assumptions differ from ours in a few ways:
    The measurement time is longer than all $g_{1,s}$-geodesics, not just greater than the $g_{1,s}$-diameter.
    Both metrics $g_{1,p/s}$ are assumed to have a strictly convex foliation, $\p \Omega$ is strictly convex with respect to $g_{1,p/s}$, and neither metric is assumed simple.
    Here is a sketchy argument to support this:
    The overall idea is similar to the one we give for our main results in this paper, making use of \cite{rachele2000boundary}, \cite[Theorem 1.4.]{stefanov2016boundary}, 
    \cite[Theorems 2]{stefanov2017elastic},  and \cite[Theorem 1]{zhai2025determination}. 
    However, in order to recover the lens data instead of the boundary distance function, one needs to replace our lemma~\ref{lem:Rac2} with  \cite[Theorems 2]{stefanov2017elastic}.
    To our best knowledge, this result has not appeared in the literature yet.
\end{itemize}
\end{itemize}

Our main result is analogous to many well-known rigidity results in geometry. For instance, it was proved in \cite{gromov1983filling} that the Euclidean metric is boundary distance rigid. This means that if any Riemannian metric on a bounded Euclidean domain with smooth boundary has the same distances between all boundary points as the Euclidean metric, then this metric is Euclidean up to some boundary-preserving diffeomorphism. 
On the other hand, it was proved in \cite[Theorem A]{croke1991rigidity} that any complete Riemannian metric without conjugate points on $\R^n$ which is isometric to the Euclidean metric outside a compact set must be isometric to the Euclidean metric. One should read these classical results as ``if it looks flat, it is flat". Particularly, no additional assumptions are imposed for the second metric. This is also the case in our work.
It is well known that not all manifolds are boundary rigid, and there are non-isometric manifolds with a common boundary distance function. This will easily happen if a manifold has trapped geodesics or if all distance-minimizing geodesics between boundary points avoid some interior region of the manifold.

Since the days of \cite{gromov1983filling}, the study of the boundary rigidity problem has seen many breakthroughs. It was conjectured in~\cite{michel1981rigidite} that all simple Riemannian manifolds are boundary rigid.
In two dimensions, this was verified in~\cite{pestov2005two}.
In higher dimensions, the boundary rigidity problem is still open. However, the boundary distance rigidity holds in a conformal class of simple metrics \cite{croke1991rigidity}, while a local version of this result was presented in \cite{stefanov2016boundary}. Metrics near Euclidean are boundary rigid \cite{burago2010boundary}, a generic simple metric is boundary rigid \cite{SU3}. In their groundbreaking paper \cite{stefanov2017local}, Stefanov--Uhlmann--Vasy prove that any three or higher-dimensional Riemannian manifold that admits a strictly convex function is boundary rigid. 
It is not known if all simple $n$-manifolds for $n\geq 3$ satisfy the strictly convex foliation condition. This is a major open problem in geometric inverse problems since establishing this would, in combination with \cite{stefanov2017local}, settle Michel's conjecture~\cite{michel1981rigidite}. 
We would also like to mention the recent survey \cite{survey}, and the excellent textbook \cite{paternain2023book} on this topic.

Very recently, Oksanen--Rakesh--Salo proved in \cite{oksanen2024rigidity} the analogues rigidity result for the Lorentzian Calder\'on problem. In this work, the authors showed that if a globally hyperbolic Lorentzian metric in $\R^{n+1}$ agrees with the Minkowski metric outside a compact set and has the same DN-map as the Minkowski metric then this metric agrees with the Minkowski metric modulo some diffeomorphism of $\R^{n+1}$ that agrees with the identity outside a larger compact set. Again, in this paper, no additional assumptions are imposed for the second metric. We would like to point out that in the elliptic case, the Calder\'on problem --- which asks whether the elliptic DN-map determines a Riemannian manifold --- such a rigidity result is not known. Actually, even the local version, where the second metric is assumed to be close to the Euclidean metric, remains unsolved.

There is a vast literature concerning the classical formulation of the Lorentzian Calder\'on problem. In this setting, one assumes that the Lorentzian manifold $(M, g)$ is ultrastatic (or a product), that is, $M = \R \times N$, and $g(t, x) = -dt^2 + h(x)$ where $(N, h)$ is a compact Riemannian manifold with smooth boundary. In this setting, the wave operator is 
$
\Box_g:=\p_t^2-\Delta_g,
$
and the question posed above is equivalent to the Gel'fand problem \cite{gel1954some} or the inverse boundary spectral problem \cite{Katchalov2001}. This problem asks if the boundary spectral data, eigenvalues, and Neumann traces of the Dirichlet eigenfunctions of the Laplacian determine a compact Riemannian manifold with boundary up to an isometry.
The Boundary Control method (BC-method), introduced in \cite{belishev1987approach, belishev1992reconstruction} and based on the time-optimal unique continuation theorem \cite{Tataru}, shows that the hyperbolic DN-map determines $(N,h)$ up to a diffeomorphism. There
are many further developments of the BC-method. See for instance \cite{helin2018correlation, Katchalov2001, kurylev2018inverse,  lassas2025hyperbolic, lassas2024disjoint, lassas2014inverse, saksala2025inverse}.
The unique continuation theorem remains valid for metrics $g(t, x)$ depending real-analytically on $t$, and in this case, a version
of the BC-method has been developed in \cite{eskin2007inverse}. However, the BC-method does not extend to general Lorentzian metrics, since the time-optimal unique continuation theorem fails in general \cite{Alinhac, Alinhac_Baoendi}. 
The recent breakthrough works \cite{Alexakis_Feiz_Oksanen_22, Alexakis_Feiz_Oksanen_23} make progress in the Lorentzian Calder\'on problem in a fixed conformal class by proving a time-optimal unique continuation theorem where real-analyticity is replaced by
Lorentzian curvature bounds.

In comparison to the acoustic hyperbolic inverse problem, which was discussed above, only very little is known about its counterpart in linear isotropic elasticity.
The difficulty in solving these elastic problems lies in their nature of being hyperbolic systems carrying several wave speeds. Even though, Holmgren–John–Tataru unique continuation results are known to be true for the Lam\'e systems \cite{eller2002uniqueness}, such as \eqref{eq:bvp}, we lack the boundary controllability of the elastic waves in the subdomain filled with the faster $p$-waves, while such a control exists in the smaller subdomain filled with the slower $s$-waves \cite{belishev2002dynamical}. Thus, the BC-method is not applicable for the hyperbolic inverse problem for Lam\'e systems.   
To the best of our knowledge, all positive results for Lam\'e systems have been obtained under major geometric constraints. It is worth mentioning that the acoustic analog of this inverse problem was solved in \cite{belishev1992reconstruction} without any geometric assumptions. Next, we survey literature for the hyperbolic inverse problem in linear elasticity.

In \cite{belishev2006dynamical} it was shown that if $\mu_i$ are constants, then the wave speeds are determined by the hyperbolic DN-map. This result is based on wave splitting, due to which the problem can be reduced to the inverse problems for the acoustical and Maxwell subsystems. 
In a series of three papers \cite{rachele2000boundary, rachele2000inverse, rachele2003uniqueness}, Rachele proved that if both metrics $g_p$ and $g_s$ are simple and the sectional curvature of $g_p$ is at most only slightly positive, then the hyperbolic DN-map determines the Lam\'e triplet $(\lambda,\mu,\rho)$ outside the locus $\lambda=2\mu$.
In contrast to Rachele's symmetric assumption on the two models, we assume that one of the Lam\'e triplets is both simple and satisfies the convex foliation condition, while we impose no geometric assumptions for the second triplet whatsoever.

During the last decade, dating back to the breakthrough results of Uhlmann--Vasy \cite{uhlmann2016inverse} and Stefanov--Uhlmann--Vasy \cite{stefanov2017local}, it has been popular to study geometric inverse problems under the strict convex foliation condition. Under this geometric assumption, it was shown in \cite{stefanov2017elastic} that the knowledge of the hyperbolic DN-map uniquely determines the $p$-wave speed $c_p$, if there is a strictly convex foliation with respect to it. In this paper, a similar result was also presented for the $s$-wave speed $c_s$. Analogously to the simple geometries, \cite{bhattacharyya2018local} extended \cite{stefanov2017elastic} to the recovery of the whole Lam\'e triplet $(\lambda,\mu,\rho)$ outside the locus $\lambda=2\mu$. 
Very recently, Zhai extended in \cite{zhai2025determination} this result to the determination of the Lam\'e triplet $(\lambda,\mu,\rho)$ while only requiring the foliation condition for the $s$-wave speed and not requiring $\lambda\neq 2\mu$.

As mentioned above in isotropic elasticity, there are two kinds of waves: $p$- and $s$-waves, with  $s$-waves corresponding (in spatial dimension 3) to a multiplicity 2 (and $p$-waves a simple) eigenvalue of the Christoffel matrix
$
\Gamma_{i\ell}(x,\xi):= \rho^{-1}c_{ijk\ell}\xi_j \xi_k,
$ 
for $x \in \Omega$ and $\xi \in \R^3\setminus\{0\}$. 
In anisotropic elasticity, typically the $s$-wave eigen space is broken up, at least in most parts of the cotangent bundle. Thus, there are three waves. The $qp$ waves, as well as the $qs_1$ and $qs_2$ waves, with the latter corresponding to the ‘breaking up’ of the $s$-waves.

Even less is known about the recovery of anisotropic elastic coefficients from the respective hyperbolic DN-map. Up to the best of our knowledge, the state-of-the-art results in this regime are \cite{mazzucato2007uniqueness, de2019unique, de2020recovery, zou2024partial}. 
In \cite{mazzucato2007uniqueness}, the authors study general anisotropic elastic media that have a disjoint wave mode, and extend results from microlocal analysis \cite{rachele2000boundary, rachele2000inverse, rachele2003uniqueness} to describe the propagation of singularities for the disjoint mode. Applying these results, they showed that the DN-map uniquely determines the travel time between boundary points for the disjoint mode. In a general anisotropy, the disjoint wave mode does not need to correspond to a Riemannian metric but rather a Finsler metric \cite{de2019inverse}. It is known that even simple Finsler manifolds are not boundary rigid \cite{ivanov2013local}, but it is not known that this would be the case for Finsler metrics arising from linear elasticity.
Paper \cite{de2019unique} studies the recovery of the piecewise analytic density and stiffness tensor of a three-dimensional domain from the hyperbolic DN-map. In this article, a global uniqueness result is presented if the medium is (1) transversely isotropic, there is an axis of symmetry such that in the plane orthogonal to this axis, the elastic tensor is isotropic; the elastic tensor has five independent coefficients instead of the 21 in the fully general isotropic case, or (2) orthorhombic, three mutually orthogonal planes of symmetry; the elastic tensor has nine independent coefficients.
The authors also obtain the uniqueness of a fully anisotropic stiffness tensor, assuming that it is piecewise constant.
The last two papers focus on a transversely isotropic medium. In the older \cite{de2020recovery} of these two the authors show that in transversely isotropic media, under appropriate convexity conditions, knowledge of the $qs_1$ wave travel times determines the tilt of the axis of isotropy as well as some of the elastic material parameters, and the knowledge of $qp$ and $qs_2$ travel times conditionally determines a subset of the remaining parameters, in the sense that if some of the remaining parameters are known, the rest are determined, or if the remaining parameters satisfy a suitable relation, they are all determined, under certain non-degeneracy conditions. The newer of the two papers \cite{zou2024partial} presents stability estimates for recovering either one parameter from one wave speed or two parameters from two wave speeds with the remaining parameters either known or with a known functional relationship. In particular, these estimates provide injectivity among parameters that differ on sets of small width.

\subsection*{Acknowledgments}

JI was supported by the Research Council of Finland (Flagship of Advanced Mathematics for Sensing Imaging and Modelling grant 359208; Centre of Excellence of Inverse Modelling and Imaging grant 353092; and other grants 351665, 351656, 358047, 360434) and a Väisälä project grant by the Finnish Academy of Science and Letters.
TS was partially supported by the National Science Foundation (DMS-2510272) and the  Simons Foundation Travel Support for Mathematicians (MPS-TSM-00013291). LY is partially supported by the AMS-Simons Travel Grant.

\section{Proofs}

Our proof combines ideas from
\cite{croke1991rigidity, rachele2000boundary,rachele2000inverse,zhai2025determination}.
We will be working with two Lamé triplets $(\lambda_i,\mu_i,\rho_i)$, $i=1,2$, and the associated DN-maps $\DN_1$ and $\DN_2$ in most of our lemmas.
We begin the discussion of the steps of the proof of Theorem~\ref{thm:general} by recalling the following boundary determination result.

\begin{lemma}[{\cite[Theorem 1]{rachele2000boundary}}]
\label{lem:Rac1}

Let $\Omega \subset \R^3$ be a bounded domain with smooth boundary.
Suppose $(\lambda_i,\mu_i,\rho_i)$, $i=1,2$, are two smooth Lam\'e triplets satisfying \eqref{eq:strong-convexity}. 
If $\DN_1=\DN_2$, 
then the following holds on $\p \Omega$:
\begin{equation}
\p_\nu^k \lambda_1= \p_\nu^k \lambda_2,
\quad 
\p_\nu^k \mu_1= \p_\nu^k \mu_2,
\quad \text{ and } \quad 
\p_\nu^k \rho_1= \p_\nu^k \rho_2,
\quad 
\text{for all } k \in \{0,1,\ldots\}.
\end{equation} 
\end{lemma}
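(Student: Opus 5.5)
This lemma is quoted from \cite[Theorem 1]{rachele2000boundary}. If we were to prove it ourselves, we would follow Rachele's microlocal approach: show that $\DN$ is a pseudodifferential operator near the boundary, and read the jets of $(\lambda,\mu,\rho)$ from its full symbol. The setting is a point $(t_0,x_0,\tau,\eta)\in T^*\Sigma^T$ in the elliptic (hyperbolic-free) region $|\tau|<c_s|\eta|_{g}$ on the boundary, or alternatively in the hyperbolic region for both modes. There, we would construct the solution of \eqref{eq:bvp} with boundary data microlocalized at that point by a parametrix. In boundary normal coordinates $(x',x^3)$ with $x^3$ the distance to $\p\Omega$, we would factorize the elastic operator $P$ as in the scalar Lee--Uhlmann argument:
\begin{equation}
P \equiv \rho\,(D_{x^3} - iB(x,D_{t,x'}))\,(D_{x^3}-iA(x,D_{t,x'}))
\end{equation}
modulo smoothing operators, with $A$ a $3\times 3$ matrix-valued first order pseudodifferential operator in the tangential variables. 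Its principal symbol has eigenvalues with negative real part, namely $-\sqrt{|\eta|^2-\tau^2/c_s^2}$ (multiplicity 2) and $-\sqrt{|\eta|^2-\tau^2/c_p^2}$ (multiplicity 1) in the elliptic region. Then $u|_{\p\Omega}=f$ and $\p_{x^3}u = Af$ modulo smoothing, so $\DN$ becomes an explicit first order pseudodifferential operator built from $A$ and the coefficients $\lambda,\mu$ at the boundary.

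The key steps, in order, are these:
\begin{itemize}
\item[(i)] Determine the full symbol of $A$ recursively. The principal part $a_1$ solves a matrix Riccati-type equation, and the lower order terms $a_{1-j}$ solve Sylvester equations $a_1 X + X b_1 = (\text{known})$. These are uniquely solvable because the spectra of $a_1$ and $-b_1$ are disjoint.
\item[(ii)] Compute the principal symbol of $\DN$ explicitly. It involves $\mu$, $\lambda$, $c_s$, $c_p$ at $x^3=0$. Varying $\tau/|\eta|$, for instance through the Rayleigh-type dependence on $\tau$, gives $\rho,\lambda,\mu$ on $\p\Omega$.
\item[(iii)] Induct on $k$. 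The symbol term of order $1-k$ depends on $\p_\nu^k(\lambda,\mu,\rho)$ linearly, plus terms involving only lower-order normal derivatives. Extract this linear dependence explicitly and show that the coefficient map is injective.
\end{itemize}

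The main obstacle is step (iii). We must verify that the linear map from $(\p_\nu^k\lambda,\p_\nu^k\mu,\p_\nu^k\rho)$ to the top new term of the symbol is injective. This requires solving the Sylvester equations explicitly, for example by diagonalizing $a_1$ using the $p/s$ polarization basis $\{\hat\xi, \text{transversal}\}$. We would then check that the resulting three scalar functions of $\tau/|\eta|$ are linearly independent as $\tau$ varies. We would do this by examining asymptotics as $\tau\to 0$ and near $\tau\to c_s|\eta|$, where the square roots give distinct singular behavior. Finally, $\DN_1=\DN_2$ forces equality of the full symbols, hence of all the jets.
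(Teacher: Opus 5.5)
Your proposal agrees with the paper, which gives no argument of its own and simply cites Rachele's boundary determination theorem; your outline is the standard route behind that result: microlocally $\DN$ is a pseudodifferential operator, its full symbol is computed recursively from a factorization of $P$ at the boundary, and one inducts on the number of normal derivatives. One small correction: the coefficient of $D_{x^3}^2$ in $P$ is the normal--normal stiffness matrix $\mu I+(\lambda+\mu)\,\nu\otimes\nu$, i.e.\ $\mathrm{diag}(\mu,\mu,\lambda+2\mu)$ in a frame adapted to $\nu$, so the prefactor in your factorization must be that matrix, not the scalar $\rho$.
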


Then we prove that the $p$-wave, as well as the $s$-wave, boundary distance functions agree.

\begin{lemma}
\label{lem:Rac2}
Let $\Omega \subset \R^n$, for $n\geq 2$, be an open and bounded domain with smooth boundary.
Suppose $(\lambda_i,\mu_i,\rho_i)$ for $i=1,2$ are two smooth Lam\'e triplets in~$\overline{\Omega}$ satisfying \eqref{eq:strong-convexity}. 
Make the following assumptions:
\begin{itemize}
\item $\DN_1=\DN_2$;
\item On $\p\Omega$ for all $k \in \{0,1,\ldots\}$
\begin{equation}
\p_\nu^k \lambda_1= \p_\nu^k \lambda_2,
\quad 
\p_\nu^k \mu_1= \p_\nu^k \mu_2
\quad \text{ and } \quad 
\p_\nu^k \rho_1= \p_\nu^k \rho_2;
\end{equation} 
\item $\p\Omega$ is strictly convex with respect to Riemannian metrics $g_{1,p}$ and $g_{1,s}$;
\item the distances between all pairs of points on $\partial\Omega$ are less than $T$ with respect to both metrics $g_{1,p}$ and $g_{1,s}$.

\end{itemize}
Then
\begin{equation}
d_{1,p}(z,w)=d_{2,p}(z,w)
\quad
\text{and}
\quad
d_{1,s}(z,w)=d_{2,s}(z,w)
\end{equation}
for all $z,w \in \partial \Omega$.
\end{lemma}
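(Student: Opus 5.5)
We adapt Rachele's travel-time argument \cite{rachele2000boundary,rachele2000inverse}, building on propagation of singularities \cite{dencker1982propagation} and the asymmetry of the hypotheses: model 1 is geometrically nice, while model 2 is only controlled near the boundary by the jet agreement. The guiding principle is that $\DN f$ is singular at $(t,y)$ precisely when a singularity sent in from the boundary at time $0$ reaches $y$ at time $t$. We will show that the \emph{first} arrival times of $p$- and $s$-singularities between boundary points equal $d_{i,p}$ and $d_{i,s}$ respectively, for each $i$. Since $\DN_1=\DN_2$, these first arrival times coincide for the two models.

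\textbf{Step 1: boundary sources are indistinguishable.} Because the full Taylor jets of $(\lambda_i,\mu_i,\rho_i)$ agree on $\p\Omega$, we can extend both triplets smoothly to a neighborhood $\widetilde\Omega\supset\overline\Omega$ so that they coincide on $\widetilde\Omega\setminus\Omega$; the extension can be $C^\infty$ across $\p\Omega$ precisely because the jets match. Fix $z\in\p\Omega$ and a point $z'$ just outside $\Omega$ close to $z$. Take a source $F$ supported near $(0,z')$ producing a conormal/Gaussian-beam type singularity that is purely $p$- or purely $s$-polarized. Along the Cauchy data on $\p\Omega$ it then generates boundary data $f$. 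Since $\DN_1=\DN_2$, a standard argument (solve outside, glue) shows that the exterior solutions of the two models coincide in $(0,T)\times(\widetilde\Omega\setminus\Omega)$. Equivalently, $u_1$ and $u_2$ share Cauchy data on $\Sigma^T$, so the singular supports of $\DN_1 f=\DN_2 f$ on $\Sigma^T$ are identical.

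\textbf{Step 2: first arrival equals distance.} For model $i$, a singularity entering at $z$ travels along null bicharacteristics of the $p$ or $s$ symbol, i.e.\ $g_{i,p}$- or $g_{i,s}$-geodesics. At the boundary these reflect and mode-convert, possibly also gliding. Any point $w\in\p\Omega$ reached at time $t$ is therefore reached by a broken path whose length, measured in the slower metric along its $s$-legs and the faster metric along its $p$-legs, equals $t$. For the $p$-wave, every leg length is bounded below in terms of $d_{i,p}$, since $c_p>c_s$ gives $g_{i,p}\le g_{i,s}$. Hence the earliest arrival at $w$ is at time $\ge d_{i,p}(z,w)$, for $z$ and $w$ separated in the sense of the subsequent analysis. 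Conversely, a $g_{i,p}$-minimizing curve from $z$ to $w$ carries a genuine singularity. The delicate cases are when this curve is not a nice geodesic or touches the boundary, and also for model 2 with no geometric assumptions. In those cases we approximate it by interior geodesics or use a nonvanishing principal-symbol argument, showing that $w$ is in the singular support at time $d_{i,p}(z,w)$. The same argument with $s$-polarized sources gives $d_{i,s}$; here the earliest $s$-arrival must be separated from the earlier $p$-arrival, which we do by polarization, i.e.\ by reading the microlocal symbol of $\DN f$ at $(d,w)$. For model 1, strict convexity of $\p\Omega$ for both metrics, together with $T>$ all boundary distances, guarantees that the first arrival is realized by an interior geodesic within time $T$. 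Then $t_1(z,w)=d_{1,\cdot}(z,w)<T$. Because the singular supports agree, $t_2=t_1<T$, and the lower-bound argument for model 2 then yields $d_{2,\cdot}\le t_2$ and $d_{2,\cdot}\ge t_2$.

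\textbf{Main obstacle.} The hard part is Step 2 for model 2, where nothing prevents conjugate points, gliding rays, or nonconvex boundary. We must show that the first singularity observed is exactly at $d_{2,p}$ (resp.\ $d_{2,s}$) and that it is not cancelled. The first attempt will be to follow Rachele's approach: construct the parametrix only up to the first hitting time and use finite speed of propagation with respect to $g_{2,p}$ (domain of dependence) for the lower bound. For the upper bound, produce a singularity along a short broken minimizing path, checking that its principal amplitude is nonzero in the Neumann trace. Distinguishing the $s$-arrival from the earlier $p$-arrivals and their mode conversions is handled by polarization, and we expect this to require the most care.
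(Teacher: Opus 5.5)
Your outline has a genuine gap in the $s$-wave half of Step 2. The only lower bound on arrival times you actually have is finite speed of propagation, and for the Lamé system that is governed by $c_p$: as you note yourself, every broken path costs at least $d_{i,p}(z,w)$. Nothing gives you that $s$-polarized singularities reach $w$ no earlier than $d_{i,s}(z,w)$. In the boundary value problem every hit of $\partial\Omega$ both reflects \emph{and} mode-converts. A ray that runs as a $p$-wave from $z$ to some $y\in\partial\Omega$ and then, after conversion, as an $s$-wave from $y$ to $w$ arrives at $w$ $s$-polarized. Its travel time is bounded below only by $d_{i,p}(z,w)$, and nothing forces it to be at least $d_{i,s}(z,w)$. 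Multiply reflected or converted waves may also arrive at $(d_{i,s}(z,w),w)$ itself.

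Reading the polarization of $\DN f$ at $w$ only identifies the type of the last, incoming leg. It does not tell you whether the whole path was an unbroken $g_{i,s}$-geodesic. For model 2 you have no control over such broken paths, so ``first $s$-arrival $=d_{2,s}$'' is not established. This is exactly the part you defer as the main obstacle, so half of the lemma is unproved.

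Some of the obstacles you list do not actually arise. By Lemma~\ref{lma:sff}, agreement of the coefficients and their first normal derivatives on $\partial\Omega$ forces the second fundamental forms for $g_{1,p/s}$ and $g_{2,p/s}$ to coincide. Hence $\partial\Omega$ is strictly convex for model 2 as well. There are no gliding rays, and $g_{2,p/s}$-minimizers between boundary points are interior geodesics meeting $\partial\Omega$ transversally. Your conclusion in Step 1 that $\DN_1 f$ and $\DN_2 f$ have equal singular supports is immediate from $\DN_1=\DN_2$ and carries no new information.

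The paper sidesteps boundary interactions entirely. It extends both triplets by a common extension to $\R^n$ and solves the Cauchy problem with initial data supported outside $\overline\Omega$. Equality of the DN maps gives $\hat u_1=\hat u_2$ outside $\overline\Omega$ for $t<T$. Rachele's solutions, whose wavefront set is a single bicharacteristic, then show that the $g_{1,p/s}$- and $g_{2,p/s}$-geodesics entering at $z$ coincide outside $\overline\Omega$. Since this problem has no boundary, $p$- and $s$-singularities never convert into each other.

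Strict convexity for both models makes each geodesic leave $\overline\Omega$ immediately after its first return. So exit times and exit points agree whenever they are below $T$. The distance is then recovered as $d_i(z,w)=\min\{\tau^{i}_{\hat zz}-\delta_{\hat zz}: e^{i}_{\hat zz}=w\}$, and $d_1(z,w)<T$ guarantees that the minimizing direction is admissible.

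The missing idea is to recover exit data of \emph{individual} geodesics and minimize over directions, rather than first arrival times. You could implement this within your boundary-source framework too. Take $f$ microlocalized at a single boundary covector at $(0,z)$ and polarized so that only the outgoing $s$-mode is excited (microlocally, modulo smoothing). Until that ray returns to $\partial\Omega$ nothing branches. The first singularity of $\DN f$ away from $(0,z)$ then sits at its exit point and time, which can be matched between the two models.
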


We will return to the proof of the preceding lemma in a moment, but we will cover some ideas needed in the proof.

\begin{lemma}
\label{lma:sff}
Let $\Omega \subset \R^n$, for $n\geq 2$, be an open and bounded domain with smooth boundary.
Suppose $(\lambda_i,\mu_i,\rho_i)$ for $i=1,2$ are two smooth Lam\'e triplets in~$\overline{\Omega}$.

If the coefficients $\lambda_i,\mu_i,\rho_i$ and their first order normal derivatives agree between $i=1$ and $i=2$ on $\partial\Omega$, then the second fundamental forms of the boundary with respect to $g_{1,p}$ and $g_{2,p}$ agree, as do the ones with respect to $g_{1,s}$ and $g_{2,s}$.
\end{lemma}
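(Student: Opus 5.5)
The plan is to compute the second fundamental form of $\partial\Omega$ with respect to a conformally Euclidean metric $g = e^{2\varphi}\delta$ explicitly. We will see that it depends only on the Euclidean second fundamental form, on $\varphi|_{\partial\Omega}$, and on $\partial_\nu\varphi|_{\partial\Omega}$. Here $g_{i,p}=c_{i,p}^{-2}\delta$ and $g_{i,s}=c_{i,s}^{-2}\delta$, so $\varphi_{i,p}=-\log c_{i,p}=-\tfrac12\log\frac{\lambda_i+2\mu_i}{\rho_i}$ and $\varphi_{i,s}=-\tfrac12\log\frac{\mu_i}{\rho_i}$. These are smooth functions of the coefficients; the logarithms are well defined because $\lambda+2\mu>0$, which follows from $\mu>0$ and $3\lambda+2\mu>0$, although only positivity of $\mu$, $\rho$ and $\lambda+2\mu$ is actually needed.

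\textbf{Step 1.} Recall the conformal change formula for the Levi-Civita connection,
\[
\nabla^g_XY=\nabla^\delta_XY+X(\varphi)Y+Y(\varphi)X-\delta(X,Y)\nabla^\delta\varphi .
\]
Let $\nu$ be the Euclidean unit normal. The $g$-unit normal is then $\nu_g=e^{-\varphi}\nu$. For $X,Y$ tangent to $\partial\Omega$ we have $g(X,\nu)=0$ and $g(Y,\nu)=0$. Hence the second fundamental form is
\[
\mathrm{II}_g(X,Y)=g(\nabla^g_XY,\nu_g)=e^{\varphi}\bigl(\mathrm{II}_\delta(X,Y)-\delta(X,Y)\,\partial_\nu\varphi\bigr),
\]
up to a sign convention that is the same for every metric considered. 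This is a routine pointwise computation.

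\textbf{Step 2.} The formula in Step 1 involves only $\varphi|_{\partial\Omega}$, $\partial_\nu\varphi|_{\partial\Omega}$ and the Euclidean data of $\partial\Omega$, and the Euclidean data is common to both models. By the chain rule, $\varphi_{i,p}$ and $\partial_\nu\varphi_{i,p}$ on $\partial\Omega$ are determined by the boundary values of $\lambda_i,\mu_i,\rho_i$ and their first normal derivatives. For example,
\[
\partial_\nu\varphi_{i,p}=-\tfrac12\Bigl(\frac{\partial_\nu\lambda_i+2\partial_\nu\mu_i}{\lambda_i+2\mu_i}-\frac{\partial_\nu\rho_i}{\rho_i}\Bigr).
\]
By hypothesis these boundary data agree for $i=1$ and $i=2$. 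Therefore $\mathrm{II}_{g_{1,p}}=\mathrm{II}_{g_{2,p}}$ as bilinear forms on $T\partial\Omega$. The same argument applies to the $s$-metrics.

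There is no real obstacle. The only points needing care are getting the conformal formula and the normalization of the unit normal right, and noting that the tangent spaces and the induced metrics on $\partial\Omega$ coincide for the two models, since $\varphi_1=\varphi_2$ on $\partial\Omega$. This last point is what makes the comparison of the two second fundamental forms meaningful.
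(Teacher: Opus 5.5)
Your proof is correct and takes essentially the same approach as the paper: both rest on the conformal change formula for the Levi-Civita connection together with the rescaled unit normal $e^{-\varphi}\nu$. The only cosmetic difference is that the paper works with the relative conformal factor between the two models, which vanishes together with its normal derivative on $\partial\Omega$, whereas you compare each metric to the Euclidean one. Your resulting formula $e^{\varphi}(\mathrm{II}_\delta-\partial_\nu\varphi\,\delta)$ is the same identity $e^{-\phi}SV-e^{-\phi}\,\mathrm{d}\phi(\nu)V$ that appears in the penultimate line of the paper's computation.
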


\begin{proof}
As all our metrics are conformally Euclidean, it suffices to prove that, the claim of this lemma follows from showing that a Riemannian metric $g$ and its conformal multiple $\hat g=e^{2\phi}g$ give rise to the same second fundamental form on $\partial M$ if $\phi|_{\partial M}=\partial_\nu \phi|_{\partial M}=0$.

For the metric $\hat g$ the unit normal field is $\hat\nu=e^{-\phi}\nu$ and according to \cite[Proposition 7.29]{lee2018introduction} the conformally changed Levi-Civita connection is

\begin{equation}
\hat\nabla_V W
=
\nabla_VW
+\der\phi(V)W
+\der\phi(W)V
-g(V,W)\nabla\phi
.
\end{equation}
Since $\phi$ and its normal derivative vanish along the boundary, we have that $\der\phi$ must also vanish along the boundary. By choosing $V$ pointing along the boundary, we now get from the previous equation that
\begin{equation}
\begin{split}
\hat SV
&=
-\hat\nabla_V\hat\nu
\\&=
-\hat\nabla_V(e^{-\phi}\nu)
\\&=
-e^{-\phi}\hat\nabla_V\nu
+e^{-\phi}\der\phi(V)\nu
\\&=
-e^{-\phi}\nabla_V\nu
-e^{-\phi}\der\phi(\nu)V
+e^{-\phi}g(V,\nu)\nabla\phi
\\&=
e^{-\phi}SV
-e^{-\phi}\der\phi(\nu)V
\\&=
SV
.
\end{split}
\end{equation}
Therefore the shape operators of $g$ and $\hat g$ satisfy
$S=\hat S$.
The second fundamental form is $(V, W)\mapsto g(V, SW)$, so this is also invariant under the kind of conformal change we have.
\end{proof}

Let us review the ideas of the proofs of \cite[Theorem 1]{rachele2000inverse} and \cite[Theorem 1]{mazzucato2007uniqueness} in the context of our Lemma~\ref{lem:Rac2}.
Because the two models for $i=1,2$ agree to infinite order on the boundary, the Lamé triplets have a common smooth extension from $\Omega$ to $\R^n$.
We denote the extended quantities and associated operators by the original symbols.

Consider now for both $i=1,2$ the hyperbolic initial value problem
\begin{equation}
\label{eq:src-version}
\begin{cases}
P_i\hat u_i = 0 & \text{in } (0,T)\times\R^n,\\
\hat u_i(0,\cdot)= s & \text{in }\R^n,\\
\partial_t \hat u_i(0,\cdot) = r & \text{in }\R^n.
\end{cases}
\end{equation}
For distributional initial conditions $(s,r)$ supported outside $\overline{\Omega}$, it follows from the equality of the DN-maps that the solutions agree outside the unknown domain:
\begin{equation}
\label{eq:exterior-agreement}
\hat u_1=\hat u_2
\text{ in }
(0,T)\times(\R^n\setminus\overline\Omega)
.
\end{equation}
For the proof of this claim, we refer to \cite[Theorem 3.1]{rachele2000inverse} and \cite[Proposition 2.15]{sylvester1991inverse} for the elastic and acoustic cases, respectively.
To make use of this, we need to choose initial conditions $(s,r)$ in~\eqref{eq:exterior-agreement} that allow for geometric interpretation.

Let $\hat z\in\R^n\setminus\overline{\Omega}$. For model $i$, we choose a bicharacteristic curve $\Gamma_i$ of the operator $P_i$ that passes through $(0,\hat z)$. Since, $g_{1,p/s}=g_{2,p/s}$ outside $\Omega$ we can with out loss of generality assume that $\Gamma_1=\Gamma_2$ for short times. Then we choose the initial conditions $(s,r)$ so that the wavefront set of $\hat u_i$  coincides with the conic closure of the bicharacteristic curve $\Gamma_i$. Note that these initial conditions are the same for both models. We refer to \cite[Lemma 4.3]{rachele2000inverse} for exact formulation and justification. As this result is built on top of a global parametrix construction for the Cauchy problem \eqref{eq:src-version}, global propagation of singularities property for real principal type systems \cite[Theorem~4.2]{dencker1982propagation}, and an existence of a distribution whose wavefront set agrees with a half-line in the fiber on top of $\hat z$ of the cotangent bundle, it does not rely on simplicity or any other geometric assumptions.

By equation~\eqref{eq:exterior-agreement} the wavefront sets of $\hat u_1$ and $\hat u_2$ agree outside $\overline{\Omega}$. Thus, we get as in \cite[Theorem 4]{rachele2000inverse} that the bicharacteristic curves $\Gamma_1$ and $\Gamma_2$ that start at $(0,\hat z)$ agree outside the set $(0,T)\times \overline{\Omega}$. Due to \cite[Lemma 4.1.]{rachele2000inverse} the spatial projection of the bicharacteristic curve $\Gamma_i$ are geodesics of the metric $g_{i,p/s}$. Therefore, the geodesics of $g_{1,p/s}$ and $g_{2,p/s}$ agree outside $\overline{\Omega}$ even if they would pass through $\Omega$. We do not claim that all geodesics that enter $\Omega$ will exit $\Omega$. Neither is this necessary for the arguments on top which our proof will rely on from here onward.
After this point, our proof of Lemma~\ref{lem:Rac2} diverges from the previous ones; the modification is necessary to ensure that distance is only measured using geodesics through $\Omega$, avoiding shortcuts through the extension.
As mentioned after Theorem~\ref{thm:general}, our proof and those in~\cite{rachele2000inverse,stefanov2017elastic} differ also in the measurement time assumption.

\begin{proof}[Proof of Lemma~\ref{lem:Rac2}]
Fix any $z\in\partial\Omega$ and take $\hat z\in\R^n\setminus\overline{\Omega}$ so that there is a $p / s$-wave geodesic from $\hat z$ to $z$ that does not meet $\Omega$.
We will vary $\hat z$ later in the argument.
Denote this geodesic by $\gamma_{\hat zz}^{p/s}$ and parametrize it so that it is at $\hat z$ at $t=0$.
Denote by $\delta_{\hat zz}^{p/s}$ the length of the segment from $\hat z$ to $z$.
We may, of course, extend this geodesic forward as a geodesic with respect to $g_{i,p/s}$ and denote the extension by $\gamma_{\hat zz}^{i,p/s}$.

By the argument above, we know that
\begin{equation}
\label{eq:exterior-agreement-geodesic}
\gamma_{\hat zz}^{1,p/s}(t)
=
\gamma_{\hat zz}^{2,p/s}(t)
\quad
\text{whenever }
\gamma_{\hat zz}^{1,p/s}(t)\notin\overline{\Omega}
\text{ and }
t<T.
\end{equation}

Let $t=\tau_{\hat zz}^{i,p/s}$ be the first time, finite or infinite, after $t=\delta_{\hat zz}^{p/s}$ when $\gamma_{\hat zz}^{i,p/s}(t)\in\partial\Omega$.
Let $e_{\hat zz}^{i,p/s}=\gamma_{\hat zz}^{i,p/s}(\tau_{\hat zz}^{i,p/s})\in\partial\Omega$ denote the corresponding exit point, only defined when $\tau_{\hat zz}^{i,p/s}<\infty$.

By assumption $\partial\Omega$ is strictly convex with respect to $g_{1,p/s}$.
Due to Lemma~\ref{lma:sff}, it is also strictly convex with respect to $g_{2,p/s}$.
Therefore the geodesics $\gamma_{\hat zz}^{i,p/s}(t)$ exit $\overline{\Omega}$ right after $t=\tau_{\hat zz}^{i,p/s}$.
Therefore~\eqref{eq:exterior-agreement-geodesic} implies that
\begin{equation}
\label{eq:exit-agreement}
\tau_{\hat zz}^{1,p/s}
=
\tau_{\hat zz}^{2,p/s}
\quad\text{and}\quad
e_{\hat zz}^{1,p/s}
=
e_{\hat zz}^{2,p/s}
\end{equation}
whenever $\tau_{\hat zz}^{1,p/s}<T$.
The time it takes to go from $z$ to $e_{\hat zz}^{i,p/s}$ along $\gamma_{\hat zz}^{i,p/s}(t)$ is $\tau_{\hat zz}^{i,p/s}-\delta_{\hat zz}^{p/s}$.

Pick any $w\in\partial\Omega\setminus\{z\}$.
The $g_{i,p/s}$-distance measured within $\overline{\Omega}$ (only involving geodesics staying in this set) between boundary points can be computed as
\begin{equation}
\label{eq:distance-from-minimization}
d_{i,p/s}(z,w)
=
\min
\{
\tau_{\hat zz}^{i,p/s}-\delta_{\hat zz}^{p/s};
e_{\hat zz}^{i,p/s}=w
\text{ and }
\hat z\text{ is as above}
\}
.
\end{equation}
Because $(\overline{\Omega},g_{i,p/s})$ is a compact Riemannian manifold with strictly convex boundary, for every $w$ there is indeed a $\hat z$ (which we may effectively think of as an initial direction from $z$) so that $e_{\hat zz}^{i,p/s}=w$.
There may, a priori, be several $g_{i,p/s}$-geodesics connecting $z$ to $w$, so taking the minimum is necessary.
Every length-minimizing curve between $z$ and $w$ in $\overline{\Omega}$ is in the interior apart from its endpoints due to strict convexity.
By moving the initial point $\hat z$ closer to $z$ along $\gamma_{\hat zz}^{p/s}$ the time difference $\tau_{\hat zz}^{i,p/s}-\delta_{\hat zz}^{p/s}$ stays put but $\delta_{\hat zz}^{p/s}$ approaches zero.

By assumption $d_{1,p/s}(z,w)<T$, we get from the equation~\eqref{eq:distance-from-minimization} that there is $\hat z$ so that
$d_{i,p/s}(z,w)
=
\tau_{\hat zz}^{i,p/s}-\delta_{\hat zz}^{p/s}$
and $\tau_{\hat zz}^{i,p/s}<T$.
Therefore combining~\eqref{eq:exit-agreement} and~\eqref{eq:distance-from-minimization} leads to
\begin{equation}
d_{1,p/s}(z,w)
=
d_{2,p/s}(z,w)
,
\end{equation}
with the $g_{i,p/s}$-distance measured only using geodesics staying in the set $\overline{\Omega}$.
The points $z,w\in\partial\Omega$ were arbitrary, so this completes the proof.
\end{proof}

In the scope of the previous theorem, there may, in principle, be later arrivals ($t>T$) for either $i$, but they have no effect on the first arrival times that are encoded in the distance functions.
Later arrivals are ruled out a posteriori when we know that the metrics $g_{2,p/s}$ are also simple when the metrics $g_{1,p/s}$ are simple. 
Lemma~\ref{lem:Rac2} works without any strong geometric assumption like simplicity, though, only assuming convexity of the boundary. This lemma generalizes a similar result presented in \cite{rachele2000inverse}.

We want to underline the fact that the distances in Lemma~\ref{lem:Rac2} are measured only within $\overline{\Omega}$.
Therefore, the choice of extension of the material parameters to $\R^n$ does not matter as long as the wave equation remains hyperbolic.
If one works with distances with respect to the extended model, one has to be careful to rule out shortcuts going around $\Omega$, either by studying only bicharacteristics that meet $\Omega$ or choosing the extension in a specific way.
This is why we formulated our proof above by fixing $z$ and pivoting $\hat z$ around it, as it ensures that we only study geodesics that actually enter $\Omega$.
This global geometric aspect of the argument was not presented or necessary in~\cite{rachele2000inverse}, as there it was assumed that all metrics $g_{i,p/s}$ are simple. For our main result, a more precise argument like the one above is required.

We would like to recall the following result stating that simplicity can be read directly from the boundary distance function.
To the best of our knowledge, a similar result is not known for the foliation condition.  

\begin{remark}
\label{rmk:simplicity}
A compact Riemannian manifold $(M,g)$ with strictly convex boundary is simple if and only if the boundary distance function $d_g\vert_{\partial M \times \partial M}$ is smooth away from the diagonal~\cite[Proposition 3.8.14]{PaternainSaloUhlmann2023}.
We are working with conformally Euclidean metrics that enjoy the rigidity of the following lemma, so we do not directly need this characterization of simplicity in our proof.
\end{remark}

\begin{lemma}[{\cite[Theorem C]{croke1991rigidity}}]
\label{lem:Cro}
If $g$ is a Riemannian metric on $\Omega$ and $c_1(x),c_2(x)>0$ on $\Omega$ are such that $g_i(x)=c_i(x)g(x)$ is simple for $i=1$ and $d_1(z,w)=d_2(z,w)$
for all $z,w \in \partial \Omega$, then $c_1\equiv c_2$.
\end{lemma}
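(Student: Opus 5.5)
The plan is to follow Croke's integral-geometric argument, in the spirit of Mukhometov, and to use simplicity only for $g_1$. Write $c_i=f_i^2$ with $f_i>0$, so the $g_i$-length of a curve $\sigma$ is $L_i(\sigma)=\int_\sigma f_i\,ds_g$ and $dV_{g_i}=f_i^n\,dV_g$. Set $h=f_2/f_1$. The only input from the hypothesis $d_1=d_2$ on $\partial\Omega\times\partial\Omega$ is a comparison of lengths along curves joining boundary points.

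\textbf{Step 1: a volume inequality along $g_1$-geodesics.} Since $g_1$ is simple, every pair $z\neq w$ in $\partial\Omega$ is joined by a unique $g_1$-geodesic $\gamma_{zw}$ of length $d_1(z,w)$. Its $g_2$-length is at least $d_2(z,w)=d_1(z,w)$, which gives
\begin{equation}
\int_{\gamma_{zw}} h\, ds_{g_1}\;\ge\;\int_{\gamma_{zw}} 1\, ds_{g_1}.
\end{equation}
I will integrate this over the inward-pointing boundary of the $g_1$-unit sphere bundle and apply Santal\'o's formula. This is legitimate because simplicity of $g_1$ makes every $g_1$-geodesic from the boundary exit in finite time and cover $S_{g_1}\Omega$ exactly once. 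The result is $\int_\Omega h\,dV_{g_1}\ge \mathrm{Vol}_{g_1}(\Omega)$, that is, $\int f_2f_1^{n-1}\,dV_g\ge\int f_1^n\,dV_g$. Hölder's inequality then gives
\begin{equation}
\int f_1^n\,dV_g\le\Big(\int f_2^n\,dV_g\Big)^{1/n}\Big(\int f_1^n\,dV_g\Big)^{(n-1)/n},
\end{equation}
hence $\mathrm{Vol}_{g_2}(\Omega)\ge\mathrm{Vol}_{g_1}(\Omega)$. Equality in this chain forces equality in Hölder, so $f_2=\kappa f_1$ for a constant $\kappa$. Equality in the Santal\'o step then forces $\kappa=1$.

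\textbf{Step 2: the reverse inequality, which is the main obstacle.} It remains to show $\mathrm{Vol}_{g_2}(\Omega)\le\mathrm{Vol}_{g_1}(\Omega)$, or equivalently that $\int (f_2-f_1)f_1^{n-1}\,dV_g\le 0$, with no geometric assumption on $g_2$. A naive volume bound for $g_2$ cannot work: enlarging $f_2$ on a small ball can increase volume while leaving boundary distances unchanged, since minimizers detour around the ball. So the argument must use the equality $d_2=d_1$ in both directions.

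\begin{itemize}
\item[(i)] The direction $d_2\ge d_1$ was used in Step 1.
\item[(ii)] For the direction $d_2\le d_1$, I would take a $g_2$-minimizing curve $\sigma_{zw}$ in $\overline\Omega$ between boundary points. It exists by compactness, though it need not be unique or smooth through the interior. Along it, $\int_{\sigma_{zw}}(f_1-f_2)\,ds_g\ge0$.
\end{itemize}

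The first approach I would try is the interpolation $f_t=(1-t)f_1+tf_2$ for $t$ near $0$, including small negative $t$. For such $t$ the metric $f_t^2g$ is still simple, by $C^2$-stability of simplicity. Concavity of length in $f$ together with (i) and (ii) gives one-sided comparisons $d_t\gtrless d_1$, and Step 1 can be rerun with the simple metric $f_t^2g$ as reference. Differentiating the resulting inequalities at $t=0$ should produce the opposite linear inequality $\int (f_2-f_1)f_1^{n-1}\,dV_g\le0$. If the first-order terms only reproduce Step 1, I would fall back on Croke's original route. That route shows directly that the set $\{f_2<f_1\}$ is empty, by replacing $f_2$ with $\max(f_1,f_2)$: this keeps the boundary distances at least $d_1$, and then Step 1 and the rigidity case of Step 1 are applied to that metric.

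\textbf{Step 3: conclusion.} Combining the two volume inequalities gives equality in Step 1, and the equality analysis there yields $f_1\equiv f_2$, i.e.\ $c_1\equiv c_2$.
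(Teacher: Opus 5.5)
Your Step 1 is correct. It is the Santal\'o--H\"older volume inequality behind Croke's theorem, which the paper quotes without proof. Step 2, however, is a genuine gap, and neither of your two routes closes it.

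\emph{The interpolation.} Every comparison you can make for small $|t|$ uses only hypothesis (i):
\begin{itemize}
\item for $t\in[0,1]$, the bound $d_t\ge(1-t)d_1+td_2\ge d_1$ needs $d_2\ge d_1$;
\item for $t<0$, the bound $d_t\le L_t(\gamma_{zw})\le(1-t)d_1+td_2\le d_1$ again needs $d_2\ge d_1$.
\end{itemize}
Hypothesis (ii) enters only through $L_t(\sigma_{zw})\le d_1$ for $t>1$. There $f_t$ need not even be positive, let alone simple. Accordingly, both reruns of Step 1 linearize to the same inequality $\int(f_2-f_1)f_1^{n-1}\,dV_g\ge 0$. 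No argument using only $d_2\ge d_1$ can succeed, since $f_2=2f_1$ satisfies it.

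\emph{The fallback.} Since $\max(f_1,f_2)\ge f_1$, the bounds $\tilde d\ge d_1$ and $\mathrm{Vol}(\tilde g)\ge\mathrm{Vol}(g_1)$ are automatic. The rigidity case of Step 1 would need a volume \emph{equality} that you do not have. Moreover, the maximum erases $f_2$ exactly on $\{f_2<f_1\}$, so it cannot detect that set. With $\min(f_1,f_2)$ instead, the needed bound $d_{\min}\ge d_1$ is essentially the conclusion itself. It cannot be obtained by comparing curves, because excursions into $\{f_2<f_1\}$ have no endpoints on $\partial\Omega$, where $d_1=d_2$ is known. So the second volume inequality invoked in Step 3 is not available.

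\emph{The missing idea.} Use the \emph{equality} $d_2=d_1$ together with the smoothness of $d_1$ off the diagonal (a consequence of simplicity of $g_1$) to control the geodesics of $g_2$.
\begin{itemize}
\item From $d_1=d_2$ near the diagonal you get $f_1=f_2$ on $\partial\Omega$ and strict convexity of $\partial\Omega$ for $g_2$. In the paper's application these come from Lemma~\ref{lem:Rac1} and Lemma~\ref{lma:sff}. Hence $g_2$-minimizers between boundary points are interior geodesics.
\item By the first variation formula, the tangential gradient at $z$ of the differentiable function $d_2(\cdot,w)=d_1(\cdot,w)$ equals minus the tangential part of the initial velocity of \emph{every} $g_2$-minimizer from $z$ to $w$. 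So that velocity is $\dot\gamma_{zw}(0)$ and the minimizer is unique.
\item These vectors exhaust the inward unit vectors at $z$. Hence every $g_2$-geodesic entering from $\partial\Omega$ is minimizing up to its exit point, with $\tau_{g_2}=\tau_{g_1}$.
\end{itemize}

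Now apply Santal\'o's formula for $g_2$ on its non-trapped set. Let $\theta(x)\in[0,1]$ be the proportion of $g_2$-unit vectors at $x$ whose backward geodesic reaches $\partial\Omega$. Using $L_{g_1}(\sigma)\ge d_1=d_2=L_{g_2}(\sigma)$ along these geodesics, you get
\[
\int_\Omega\theta f_2^n\,dV_g=\int_\Omega f_1^n\,dV_g,
\qquad
\int_\Omega\theta f_1f_2^{n-1}\,dV_g\ge\int_\Omega\theta f_2^n\,dV_g .
\]
H\"older with respect to $\theta\,dV_g$ gives $\int_\Omega f_1^n\,dV_g\le\int_\Omega\theta f_1^n\,dV_g$. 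Hence $\theta=1$ almost everywhere and $\mathrm{Vol}_{g_2}(\Omega)=\mathrm{Vol}_{g_1}(\Omega)$. Your equality analysis in Step 1 then yields $f_1=f_2$.

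Alternatively, once $\partial\Omega$ is strictly convex for $g_2$, the characterization in Remark~\ref{rmk:simplicity} makes $g_2$ simple. You can then run Step 1 with the roles of $g_1$ and $g_2$ exchanged.
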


So far, we have only been building on the simplicity assumption. 
However, it is possible that one or both of the equations 
    \begin{equation}
        \label{eq:locus}
    \lambda_i=2\mu_i, \quad \text{ for } i \in \{1,2\}
    \end{equation}
are true globally or locally. Either of these two equations, as well as large positive sectional curvature, would push our setting outside the scope of \cite{rachele2003uniqueness}, which proves the result $\rho_1=\rho_2$  outside the locus \eqref{eq:locus}, under the assumption that the simple $p$-wave metric has at most small positive curvature. Therefore, from here onward, we need to rely on the convex foliation condition and follow the steps of \cite{zhai2025determination}. 

\begin{lemma}[{\cite[Theorem 1]{zhai2025determination}}]
\label{lem:Zhai}
Let $\Omega \subset \R^3$ be an open and bounded domain with smooth boundary.
Suppose $(\lambda_i,\mu_i,\rho_i)$, $i=1,2$, are two smooth Lam\'e triplets. 
Suppose 
\begin{equation}
\DN_1=\DN_2
\end{equation}
and
\begin{equation}
c_p:=c_{1,p}=c_{2,p}
\quad \text{and} \quad 
c_s:=c_{1,2}=c_{2,s}.
\end{equation}
If $\p \Omega$ is strictly convex with respect to $g_s$, and $(\Omega, g_{s})$ admits a strictly convex function, and if $T>0$ is greater than the length of all geodesics in $(\Omega, g_{s})$ then
\begin{equation}
\rho_1=\rho_2.
\end{equation}
\end{lemma}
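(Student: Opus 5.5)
The plan is to reduce the recovery of $\rho$ to the injectivity of a geodesic ray transform for the $s$-wave metric $g_s$, and then invert it by the Uhlmann--Vasy layer stripping argument that the strictly convex function makes available. First I fix the algebra. Since $c_p$ and $c_s$ are common to both models, the identities $\mu_i=\rho_i c_s^2$ and $\lambda_i=\rho_i(c_p^2-2c_s^2)$ hold. So each Lamé triplet is determined by $\rho_i$ alone, and it suffices to show $\rho_1=\rho_2$. I will set $\varphi:=\log(\rho_2/\rho_1)$. By Lemma~\ref{lem:Rac1}, $\varphi$ vanishes to infinite order on $\p\Omega$, so I extend it by zero and extend the common data $c_p,c_s$ smoothly outside $\Omega$, as in the discussion preceding Lemma~\ref{lem:Rac2}.

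Second, I probe the models with $s$-wave Gaussian beams, or equivalently geometric optics solutions, concentrated along a $g_s$-geodesic $\gamma$ that enters and exits $\Omega$ within time $T$; this is possible by the assumption on $T$ and the strict convexity of $\p\Omega$. Both operators $P_1$ and $P_2$ have the same principal symbol up to the factor $\rho$. The difference in their lower order terms is a first order operator whose coefficients involve $\nabla\rho_i$, in the form of $\nabla\log\rho_i$ (through $\p_j(\mu\,\cdot)$ and $\p_j(\lambda\,\cdot)$). In the transport equation for the $s$-polarization amplitude, which is a vector in the two-dimensional plane orthogonal to $\dot\gamma$, this difference appears as an additional zeroth order term. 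I would compute it explicitly, and I expect it to take the form $\tfrac12\dot\gamma\cdot\nabla\varphi$ plus a term that rotates or couples the polarization through the derivatives of $\varphi$ normal to $\dot\gamma$. Using $\DN_1=\DN_2$ and the exterior agreement~\eqref{eq:exterior-agreement}, the outgoing amplitudes must coincide. Integrating the transport equations then gives identities along every such $\gamma$.

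Third, from these identities I extract a linear integral transform of $\varphi$. The $\dot\gamma\cdot\nabla\varphi$ part integrates to boundary terms, which vanish. The informative part should be the geodesic ray transform of a second order quantity, for instance a combination of $\nabla^2\varphi$ and $|\nabla\varphi|^2$ contracted with polarization vectors $\eta\perp\dot\gamma$. Taking several polarizations yields the ray transform of a symmetric $2$-tensor built from $\varphi$, which can be reduced to a function equation. This step is the main obstacle: the exact structure of the transport term must be identified, and nonlinearity must be avoided. I would first try linearizing, by writing $\rho_2=\rho_1 e^{\varphi}$ and absorbing the quadratic terms into a matrix weight in the polarization ODE. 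This yields a matrix-weighted (attenuated) ray transform of the form $\int_\gamma W(t)\,F(\varphi)\,dt=0$.

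Finally, I would apply the local invertibility results for weighted ray transforms near strictly convex points (Uhlmann--Vasy, Stefanov--Uhlmann--Vasy), first near $\p\Omega$ and then layer by layer along the level sets of the strictly convex function. These results hold for weights and attenuations of this type. They show that $F(\varphi)$ vanishes on each layer, and since $\varphi$ vanishes on the boundary of each layer, an ODE or unique continuation argument gives $\varphi\equiv0$. The foliation covers $\Omega$, so $\rho_1=\rho_2$, and hence $\lambda_1=\lambda_2$ and $\mu_1=\mu_2$ by the algebra of the first step.
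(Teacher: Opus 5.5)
The paper does not prove this lemma; it imports it from Zhai's Theorem~1, so your proposal has to stand on its own. Its architecture is reasonable: $s$-wave high-frequency solutions along $g_s$-geodesics, reduction to a ray transform, Uhlmann--Vasy/Stefanov--Uhlmann--Vasy local inversion near convex points, layer stripping, and elliptic unique continuation. But the step carrying all the content is missing, and the part you do describe is incorrect.

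Write $u=e^{i\omega(\phi(x)-t)}(a_0+\omega^{-1}a_{-1}+\cdots)$ with $\mu|\nabla\phi|^2=\rho$, $a_0\perp\nabla\phi$, and let $\Pi$ be the projection onto $\nabla\phi^{\perp}$. At order $\omega$, two terms contain derivatives of $\lambda,\mu$ not along $\nabla\phi$:
\begin{itemize}
\item $-i\omega(\nabla\phi\cdot a_0)\p_i\lambda$, which is zero;
\item $-i\omega\,\p_i\phi\,(a_0\cdot\nabla\mu)$, coming from $-(\p_j\mu)\p_iu_j$. It is parallel to $\nabla\phi$, so $\Pi$ kills it; it only fixes the $p$-component of $a_{-1}$.
\end{itemize}
The $s$-transport equation is therefore $2\mu\,\Pi(\nabla\phi\cdot\nabla)a_0+\operatorname{div}(\mu\nabla\phi)\,a_0=0$. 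With $c_s$ common and equal data outside $\Omega$ (where $\varphi=0$), one checks directly that $a_0^{(2)}=e^{-\varphi/2}a_0^{(1)}$ exactly. So there is no ``term that rotates or couples the polarization'' at this order. The outgoing principal amplitudes agree automatically, and integrating this equation can never produce $\nabla^2\varphi$.

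The information sits one order lower, in $a_{-1}$. Its $s$-component satisfies a transport equation whose source contains two pieces:
\begin{itemize}
\item the order-$\omega^0$ part of $P(e^{i\omega\psi}e^{-\varphi/2}a_0^{(1)})$, which brings in second derivatives of $\varphi$;
\item the back-coupling of the algebraically determined $p$-component of $a_{-1}$, which involves $a_0\cdot\nabla\log\rho_i$ and hence terms quadratic in $\nabla\log\rho_i$.
\end{itemize}
You never set up this subprincipal transport equation, so the transform in your third step is a guess rather than a consequence.

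Second, even granting an identity $\int_\gamma\langle F(\varphi)\eta,\eta\rangle\,dt=0$ with $\eta\perp\dot\gamma$ transported along $\gamma$, the integrand is not a function on $\Omega$. It is a transverse, direction-dependent contraction of a tensor containing $\nabla^2\varphi$. The local injectivity results you invoke concern functions (with scalar or matrix weights) or longitudinal tensor transforms modulo potentials. Saying they ``hold for weights and attenuations of this type'' does not cover this case.

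You need an explicit reduction. For instance, summing over an orthonormal polarization basis gives three kinds of terms:
\begin{itemize}
\item $\kappa\,\Delta\varphi$;
\item minus a longitudinal second derivative, which integrates by parts along $\gamma$ into lower order terms;
\item direction-dependent first order terms whose coefficients depend on the unknown $\rho_2$.
\end{itemize}
You would then have to show two things before elliptic unique continuation from the outer level set gives $\varphi=0$ layer by layer: that $\kappa$ never vanishes, and that these lower order terms can be absorbed into the local inversion near a strictly convex point. A degenerate leading coefficient of this kind is what produced the exceptional locus $\lambda=2\mu$ in the earlier $p$-wave results. Removing that restriction is precisely the point of the cited theorem. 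As it stands, the proposal defers exactly this to ``I would compute it explicitly.''
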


We are ready to prove our main theorem.

\begin{proof}[Proof of Theorem \ref{thm:general}]
Due to Lemma~\ref{lem:Rac1} we know that for all $k =0,\,1,\dots$, we have on $\p\Omega$ that
\begin{equation}
\label{eq:infinite-agreement}
\p_\nu^k \lambda_1= \p_\nu^k \lambda_2,
\quad 
\p_\nu^k \mu_1= \p_\nu^k \mu_2
\quad\text{and}\quad
\p_\nu^k \rho_1= \p_\nu^k \rho_2.
\end{equation}
Therefore, we can take 
\begin{equation}
\hat g_p=c_{1,p}^{-2}e, \quad \text{ and } \quad \hat g_s=c_{1,s}^{-2}e
\end{equation}
to be a common smooth extension of $g_{i,p}$ and $g_{i,s}$ for both $i\in \{1,2\}$ from $\Omega$ to $\R^n$ respectively. 

Since $g_{1,p}$ and $g_{1,s}$ are simple, the set $\p \Omega$ is strictly convex with respect to both of them, which is equivalent to the positive definiteness of the respective second fundamental forms.
By Lemma~\ref{lma:sff} we have shown $\p \Omega$ to be also strictly convex with respect to metrics $g_{2,p}$ and $g_{2,s}$. 

Hence, Lemma~\ref{lem:Rac2} implies the distance functions agree on the boundary:
\begin{equation}
\label{eq:bound_dist_func_agree}
d_{g_{1,p}}
=
d_{g_{2,p}}
\quad\text{and}\quad
d_{g_{1,s}}
=
d_{g_{2,s}}
\quad\text{on }
\partial\Omega\times\partial\Omega,
\end{equation}
and we get from Lemma~\ref{lem:Cro} that
\begin{equation}
c_{1,p}=c_{2,p}
\quad 
\text{and}
\quad 
c_{1,s}=c_{2,s}.
\end{equation}

By the assumption, the metric $g_{1,s}=g_{2,s}$ admits a strictly convex function, and we get from Lemma~\ref{lem:Zhai} that the densities agree. In other words
$
\rho_1=\rho_2.
$
Since we also know that 
\begin{equation}
\sqrt{\frac{\mu_1}{\rho_1}}
=
c_{1,s}
=
c_{2,s}
=
\sqrt{\frac{\mu_2}{\rho_2}}
\quad
\text{and} 
\quad
\sqrt{\frac{\lambda_1+2\mu_1}{\rho_1}}
=
c_{1,p}
=
c_{2,p}
=
\sqrt{\frac{\lambda_2+2\mu_2}{\rho_2}},
\end{equation}
we have proved that
$\lambda_1=\lambda_2$
and
$\mu_1=\mu_2$. The proof is complete.
\end{proof}

\bibliography{bibliography}
\bibliographystyle{abbrv}

\end{document}